\newtheorem{theorem}{Theorem}[section]
\newtheorem*{theorem*}{Theorem}
\newtheorem{lemma}{Lemma}[section]
\newtheorem{proposition}{Proposition}[section]
\def\a{\alpha}
\def\l{\lambda}
\def\p{\partial}
\def\R{\mathbb{R}}
\def\Rm{\operatorname{Rm}}
\numberwithin{equation}{section}
\begin{document}
\title[The second Robin eigenvalue in non-compact ROSS]{The second Robin eigenvalue in non-compact rank-1 symmetric spaces}
\author{Xiaolong Li}
\address{Department of Mathematics, Statistics and Physics, Wichita State University, Wichita, KS 67260, USA}
\email{xiaolong.li@wichita.edu}

\author{Kui Wang} 
\address{School of Mathematical Sciences, Soochow University, Suzhou, 215006, China}
\email{kuiwang@suda.edu.cn}

\author{Haotian Wu}
\address{School of Mathematics and Statistics, The University of Sydney, NSW 2006, Australia}
\email{haotian.wu@sydney.edu.au}

\subjclass[2010]{35P15, 49R05, 58C40, 58J50}
\keywords{Second Robin Eigenvalue, Eigenvalue Comparison, Rank-1 Symmetric Space}

\begin{abstract}
In this paper, we prove a quantitative spectral inequality for the second Robin eigenvalue in non-compact rank-1 symmetric spaces. In particular, this shows that for bounded domains in non-compact rank-1 symmetric spaces,  the geodesic ball maximises the second Robin eigenvalue among domains of the same volume, with negative Robin parameter in the regime connecting the first nontrivial Neumann and Steklov eigenvalues. This result generalises the work of Freitas and Laugesen in the Euclidean setting \cite{FL21} as well as our previous work in the hyperbolic space \cite{LWW20}.
\end{abstract}

\maketitle

\section{Introduction}
The Robin eigenvalue problem is to solve 
\begin{align}\label{eq 1.1}
\begin{cases} 
-\Delta u =\l u  & \text{ in } \Omega, \\
\frac{\p u}{\p \nu} +\a u  =0 & \text{ on } \p \Omega,
\end{cases}
\end{align}
where $\Omega$ is a bounded domain with Lipschitz boundary, $\Delta$ denotes the Laplace-Beltrami operator, $\nu$ denotes the outward unit normal to $\p \Omega$, and $\alpha \in \R$ is the Robin parameter. The eigenvalues, denoted by $\l_{k,\a}( \Omega)$ for $k=1, 2, \cdots$, are increasing and continuous in $\alpha$, and for each $\alpha$ satisfy
\begin{align*} 
\l_{1, \a}(\Omega)\le \l_{2,\a}(\Omega) \le \l_{3,\a}(\Omega) \le \cdots \rightarrow \infty,
\end{align*}
where each eigenvalue is repeated according to its multiplicity. The first eigenvalue is simple if $\Omega$ is connected; the first eigenfunction is positive. The Robin eigenvalue problem  generates a global picture of the spectrum of the Laplace operator. Indeed, the Neumann $(\a =0)$, the Steklov ($\l =0$ and  $-\a$ equals the Steklov eigenvalue) and the Dirichlet ($\a \to +\infty)$ eigenvalue problems are all special cases of the Robin eigenvalue problem. Hence, existing results on the Dirichlet, Neumann or Steklov eigenvalues naturally motivate the investigation on the Robin eigenvalues.

In Euclidean space, the classical Faber-Krahn inequality asserts that the ball uniquely minimizes the first Dirichlet eigenvalue among bounded domains with the same volume. When $\alpha>0$, the ball is also the unique minimizer of the first Robin eigenvalue $\l_{1,\a}(\Omega)$ among domains of the same volume in $\mathbb{R}^n$, as was shown in dimension two by Bossel \cite{Bos86} in 1986 and extended to all dimensions $n\geq 2$ by Daners \cite{Dan06} in 2006. An alternative approach via the calculus of variations was found by Bucur and Giacomini \cite{BG10, BG15} later. For negative values of $\a$, it was conjectured by Bareket \cite{Bar77} in 1977 that the ball would be the maximizer among domains in $\mathbb{R}^n$ with the same volume. However, in 2015, Freitas and  Krej\v{c}i\v{r}{i}k \cite{FK15} disproved Bareket's conjecture by showing that the ball is not a maximizer for sufficiently negative values of $\a$. In the same paper, the authors showed that in dimension two, the disk uniquely maximizes $\l_{1,\a}(\Omega)$ for $\a <0$ with $|\a|$ sufficiently small, and conjectured that the maximizer still has radial symmetry whenever $\a<0$ and should switch from a ball to a shell at some critical value of $\a$. 

Let us turn to the shape optimization problem for the second Robin eigenvalue $\l_{2,\a}(\Omega)$. Suppose for the moment that $\Omega\subset\mathbb{R}^n$. When $\a >0$, both the second Dirichlet eigenvalue and the second Robin eigenvalue are uniquely minimized by the disjoint union of two equal balls among bounded Lipschitz domains of the same volume. This was proved by Kennedy \cite{James09}. When $\a =0$, we have $\l_{2,0}(\Omega)=\mu_1(\Omega)$, the first nonzero Neumann eigenvalue, for which the classical Szeg\"o-Weinberger inequality \cite{Sz54,Wei56} states that among domains with the same volume, the ball uniquely maximizes $\mu_1(\Omega)$, see \cite{BP12}  for stability version as well. When $\a<0$, it is expected, cf. \cite[Problem 4.41]{He17}, that $\l_{2,\a}(\Omega)$ should be maximal on the ball for a range of Robin parameters. This expectation has recently been confirmed by Freitas and Laugesen via Szeg\"o way \cite{FL20} and Weineberger way \cite{FL21} respectively. Precisely the following theorem holds true.
\begin{theorem}[Theorem A of \cite{FL21}]
Let $\Omega\subset \R^n$ be a bounded  Lipschitz domain, $n\ge 2$ and  $B$ be a round ball of the same volume as $\Omega$.  If $\a\in[-\frac{n+1}{n}R,0]$, then
\begin{align}\label{eq:1-2}
\l_{2,\a}(\Omega) \le \l_{2,\a}(B),
\end{align}
where $R$ is the radius of $B$.
Equality holds if and only if $\Omega$ is a round ball.  
\end{theorem}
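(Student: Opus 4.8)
The plan is to adapt the Szeg\"o--Weinberger method to the Robin setting. Let $u_1$ denote the first Robin eigenfunction on $\Omega$. The second eigenvalue then admits the variational characterization
\begin{equation*}
\lambda_{2,\alpha}(\Omega)=\min\left\{\frac{\int_\Omega|\nabla u|^2\,dx+\alpha\int_{\partial\Omega}u^2\,dS}{\int_\Omega u^2\,dx}\ :\ u\in H^1(\Omega)\setminus\{0\},\ \int_\Omega u\,u_1\,dx=0\right\}.
\end{equation*}
On the ball $B=B_R$ the value $\lambda_{2,\alpha}(B)$ is attained on the $n$-dimensional eigenspace spanned by $g(r)\,x_i/r$, $i=1,\dots,n$, where $g$ is a radial profile solving the associated one-dimensional Robin problem. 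I would transplant these to $\Omega$ by setting $P_i(x)=G(r)\,x_i/r$, where $G$ extends $g$ from $[0,R]$ to $[0,\infty)$ (for instance by the constant $g(R)$ beyond $R$) and $r=|x|$ is measured from an origin still to be fixed. A standard center-of-mass argument---letting the origin vary and invoking Brouwer's fixed-point theorem on the vector of integrals $\int_\Omega P_i\,u_1\,dx$---produces a choice for which all $n$ constraints $\int_\Omega P_i\,u_1\,dx=0$ hold simultaneously, so that each $P_i$ is admissible above.

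Next I would insert each $P_i$ as a trial function and sum over $i$. The angular identities $\sum_i(x_i/r)^2=1$ and $\sum_i|\nabla(x_i/r)|^2=(n-1)/r^2$ collapse the $n$ Rayleigh quotients into the single radial inequality
\begin{equation*}
\lambda_{2,\alpha}(\Omega)\int_\Omega G^2\,dx\le\int_\Omega\left((G')^2+\frac{n-1}{r^2}G^2\right)dx+\alpha\int_{\partial\Omega}G^2\,dS.
\end{equation*}
The distinctive Robin feature is the boundary term. Since $\alpha<0$ and $G^2\ge0$, I would bound it from below using $\frac{x}{r}\cdot\nu\le1$ on $\partial\Omega$ together with the divergence theorem applied to the radial field $G^2\,\frac{x}{r}$, obtaining
\begin{equation*}
\alpha\int_{\partial\Omega}G^2\,dS\le\alpha\int_\Omega\left(2GG'+\frac{n-1}{r}G^2\right)dx,
\end{equation*}
which turns the entire right-hand side into the integral over $\Omega$ of a radial function.

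Combining the two displays reduces the theorem to proving $\int_\Omega\Psi(r)\,dx\le0$, where
\begin{equation*}
\Psi(r)=(G')^2+\frac{n-1}{r^2}G^2+2\alpha GG'+\alpha\frac{n-1}{r}G^2-\lambda_{2,\alpha}(B)\,G^2.
\end{equation*}
Because $\frac{x}{r}\cdot\nu\equiv1$ on $\partial B$, the manipulations above are equalities on the ball, and the eigenvalue equation for $g$ gives exactly $\int_B\Psi(r)\,dx=0$. Since $|\Omega|=|B|$, the bound $\int_\Omega\Psi\le\int_B\Psi=0$ will follow from the elementary rearrangement principle once $\Psi$ satisfies $\Psi(r)\ge\Psi(R)$ for $r\le R$ and $\Psi(r)\le\Psi(R)$ for $r\ge R$: one then estimates $\int_\Omega\Psi-\int_B\Psi=\int_{\Omega\setminus B}\Psi-\int_{B\setminus\Omega}\Psi\le\Psi(R)\left(|\Omega\setminus B|-|B\setminus\Omega|\right)=0$.

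The main obstacle is establishing precisely this monotonicity of $\Psi$. It is here that the one-dimensional ODE for $g$, the Robin boundary relation $g'(R)+\alpha g(R)=0$, and the positivity and monotonicity of the profile $g$ must be combined: on the exterior $r>R$ the extended profile makes $\Psi$ a fully explicit function whose comparison with $\Psi(R)$ degrades once $\alpha$ is too negative, while on $[0,R]$ a differential inequality of the form $\Psi'\le0$ has to be extracted from the equation satisfied by $g$. This is exactly the step that confines $\alpha$ to the stated interval, whose endpoints correspond to the first nontrivial Neumann ($\alpha=0$) and Steklov limits, matching the heuristic that $\lambda_{2,\alpha}$ interpolates between these two problems. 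Finally, tracing the equality cases---equality in $\frac{x}{r}\cdot\nu\le1$ forces $\partial\Omega$ to be a sphere centered at the origin, and equality in the rearrangement step forces $\Omega=B$ up to a null set---yields the rigidity statement that equality holds if and only if $\Omega$ is a round ball.
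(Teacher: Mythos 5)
Your outline follows the same strategy as the actual proof (Weinberger-type trial functions $g(r)x_i/r$, a center-of-mass choice of origin, the divergence theorem to absorb the negative boundary term, and a rearrangement against a monotone radial profile), but the two places where you defer the work are exactly where the content of the theorem lives, and one concrete choice you make would break the argument for \emph{every} $\alpha<0$, not just very negative ones. The extension of $g$ past $R$ cannot be taken constant: the Robin condition gives $g'(R)=-\alpha g(R)>0$, so the constant extension makes $G'$ jump from $-\alpha g(R)$ to $0$ across $r=R$. Writing the $G'$-dependent part of your $\Psi$ as $(G')^2+2\alpha GG'=(G'+\alpha G)^2-\alpha^2G^2$, its value jumps from $-\alpha^2g(R)^2$ (inside) to $0$ (outside), so $\Psi(R^+)=\Psi(R^-)+\alpha^2g(R)^2>\Psi(R^-)$. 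Your rearrangement step needs $\Psi\le\Psi(R)$ on $\Omega\setminus B$ and $\Psi\ge\Psi(R)$ on $B\setminus\Omega$; a Lipschitz domain with $\Omega\setminus B$ concentrated just outside the sphere and $B\setminus\Omega$ just inside then violates the required inequality by roughly $\alpha^2g(R)^2\,|\Omega\setminus B|$, so the trial-function bound no longer yields $\lambda_{2,\alpha}(\Omega)\le\lambda_{2,\alpha}(B)$. The correct extension, used in \eqref{defg} of the paper and in Freitas--Laugesen, is $G(r)=g(R)e^{-\alpha(r-R)}$ for $r>R$: it is the unique choice with $G'+\alpha G\equiv0$, hence $C^1$, it minimizes the combination $(G'+\alpha G)^2$ pointwise, and it is precisely for this extension that the decrease of the profile function beyond $R$ can be verified --- this is where the upper bound on $|\alpha|$ is consumed.

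Second, the monotonicity you label ``the main obstacle'' and leave unproven \emph{is} the theorem; everything before it is bookkeeping. To establish it one needs three nontrivial facts, proved in the paper: (i) that the second eigenfunctions of the ball are the angular modes $g(r)\psi_i$ rather than the second radial mode --- you assert this, but it requires an ODE comparison argument (Proposition \ref{pro1}), valid because $\alpha\le0$; (ii) the bounds $g'>0$ and $g'/g\ge-\alpha$ on $(0,R]$ (Proposition \ref{prop 3.1}(1)); and (iii) $\lambda_{2,\alpha}(B)\ge0$ (Proposition \ref{prop 3.1}(2), obtained from the Steklov characterization together with Escobar's comparison), which your $\Psi$-formulation needs twice: inside the derivative estimate for $F=\Psi+\lambda_{2,\alpha}(B)G^2$ (Lemma \ref{monotone}), and again to conclude that $-\lambda_{2,\alpha}(B)(G^2)'\le0$ so that $\Psi$ inherits the monotonicity of $F$. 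Since none of (i)--(iii) is established in your proposal, and the constant extension invalidates the exterior half of the comparison, what you have is a correct plan coinciding with the known strategy, but with the decisive lemmas missing.
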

Inequality \eqref{eq:1-2} asserts that the ball uniquely maximizes $\l_{2,\a}(\Omega)$ among domains of the same volume provided that $\a$ lies in a regime connecting the first nonzero Neumann eigenvalue $\mu_1$ and the first nonzero Steklov eigenvalue $\sigma_1$, namely $\a \in [-\frac{n+1}{n}R^{-1}, 0]$. Taking $\a=0$ and $\a=-1/R$ recovers the Szeg\"o-Weinberger inequality \cite{Wei54} for $\mu_1(\Omega)$ and the Brock-Weinstock inequality \cite{Br01} for $\sigma_1(\Omega)$ respectively, and both classical inequalities assert that the ball is the unique maximizer among domains with the same volume in Euclidean space. We mention that the stability version of the Brock-Weinstock inequality has been proved in \cite{BPR12}. 

It is known that  the Szeg\"o-Weinberger inequality holds for domains in the hemisphere, in the hyperbolic space \cite{AB95} and in rank-1 symmetric spaces (ROSS) \cite{AS96}, asserting that the geodesic ball maximises the first nonzero Neumann eigenvalues among domains of the same volume. Binoy and Santhanam proved \cite{BS14} the Brock-Weinstock inequality in hyperbolic space and non-compact ROSS, and Castillon and Ruffini  proved \cite{CR19} a stability version of the Brock-Weinstock inequality in non-compact ROSS and that the Brock-Weinstock inequality does not hold on sphere in general. Recently, in their work \cite{LWW20}, the authors have extended the result of Freitas and Laugesen \cite{FL21} to bounded domains in real hyperbolic spaces, and proved  that in complete simply connected nonpositively curved space forms, geodesic balls uniquely maximize the second Robin eigenvalue among domains with the same volume, with
negative Robin parameter in the regime connecting the first nontrivial Neumann and Steklov eigenvalues. 

In this paper, we continue our study of the optimization problem for the second Robin eigenvalue for some negative Robin parameter \cite{LWW20}, and prove the following quantitative
Freitas-Laugesen  inequality in non-compact ROSS.
\begin{theorem}\label{thm}
Let $M^m = \mathbb{K} H^n$ be a non-compact ROSS of real dimension $m=kn$ (cf. Section \ref{sect.2}). For any $v>0$, there exists a constant $C=C(m, k, v)$ such that for any domain $\Omega\subset M$ with $|\Omega|=v$, any geodesic ball $B$ with $|B|=v$, and any $\a\in [-\sigma_1(B), 0]$, we have
\begin{align}\label{1.3}
    \l_{2,\a}(\Omega)+C \mathcal{A}^2(\Omega)\le \l_{2, \a}(B),
\end{align}
where $\mathcal{A}(\Omega)$ is the Fraenkel asymmetry of $\Omega$ defined by $$\mathcal{A}(\Omega)=\inf\left\{\frac{|\Omega\setminus B|+|B\setminus\Omega|}{|B|}~:~B~ \text{is any geodesic ball},~|\Omega|=|B| \right\}.$$
Equality holds if and only if $\Omega$ is a geodesic ball.
\end{theorem}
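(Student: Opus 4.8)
The plan is to run a Weinberger-type argument adapted to the Robin setting, following the scheme of \cite{FL21} and \cite{LWW20}, and then to extract a quantitative lower bound on the spectral deficit in terms of the Fraenkel asymmetry. Let $B=B_R$ be a geodesic ball of volume $v$, centered at a point to be chosen, and let $G(r)\Theta_i$ be the eigenfunctions realizing $\l_{2,\a}(B)$, where $\{\Theta_i\}$ are the lowest-eigenvalue angular functions of the geodesic sphere $S(r)$. By the defining property of rank-$1$ spaces — the isotropy group $K$ acts transitively on each geodesic sphere $S(r)$ — the $K$-invariant functions $\sum_i\Theta_i^2$ and $\sum_i|\nabla_{S(r)}\Theta_i|^2$ are constant on $S(r)$, and this is exactly what will collapse a summed Rayleigh quotient to a single radial weight. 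Extend the radial profile by $g(r)=G(r)$ for $r\le R$ and $g(r)\equiv G(R)$ for $r\ge R$, set $f_i=g(r)\Theta_i$ with $r$ the distance from the chosen center, and first fix that center so that $\int_\Omega f_i\,u_1\,dV=0$ for every $i$ simultaneously, where $u_1>0$ is the first Robin eigenfunction of $\Omega$. This is the usual degree/center-of-mass argument, whose only inputs are the positivity of $u_1$ and the outward behavior of the coordinate functions, and it transfers to the symmetric-space setting.

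With the $f_i$ orthogonal to $u_1$, each is admissible in the variational characterization of $\l_{2,\a}(\Omega)$, giving $\l_{2,\a}(\Omega)\int_\Omega f_i^2\le \Q_\a(f_i)$, where $\Q_\a(\phi)=\int_\Omega|\nabla\phi|^2\,dV+\a\int_{\p\Omega}\phi^2\,dA$. Summing over $i$ and invoking the two angular identities collapses the left side to $\l_{2,\a}(\Omega)\,D(\Omega)$ with $D(\Omega)=\int_\Omega d(r)\,dV$, $d(r)=c\,g(r)^2$. The next step is to remove the boundary term from $\sum_i\Q_\a(f_i)$: applying the divergence theorem to a suitable radial vector field and using $\langle\nabla r,\nu\rangle\le 1$ together with $\a\le 0$ and the Robin relation $\a=-G'(R)/G(R)$ on $\p B$, I would bound $\sum_i\Q_\a(f_i)\le\int_\Omega q(r)\,dV$ by a purely radial volume integral. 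Setting $P(r):=\l_{2,\a}(B)\,d(r)-q(r)$, these steps combine into $\l_{2,\a}(B)-\l_{2,\a}(\Omega)\ge \int_\Omega P\,dV\,/\,D(\Omega)$, normalized so that $\int_B P\,dV=0$ because the trial functions are exact eigenfunctions on $B$. The monotonicity lemma — that $P$ is nondecreasing in $r$, which is exactly where the range $\a\in[-\sigma_1(B),0]$ interpolating the Neumann and Steklov endpoints is used — then yields $\int_\Omega P\ge\int_B P=0$ by the bathtub principle, recovering the non-quantitative Freitas--Laugesen inequality $\l_{2,\a}(\Omega)\le\l_{2,\a}(B)$.

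For the quantitative improvement I would refine the bathtub step. Writing $t=|\Omega\setminus B|=|B\setminus\Omega|$ (equal because $|\Omega|=|B|$) and using $\int_B P=0$, one has the identity $\int_\Omega P=\int_{\Omega\setminus B}(P-P(R))+\int_{B\setminus\Omega}(P(R)-P)$, with both integrands nonnegative by the monotonicity of $P$. If $P$ is continuous at $R$ with a uniform positive slope $P'(R)\ge\delta>0$, then minimizing each integral over admissible mass distributions — the worst case being a thin spherical shell adjacent to $\{r=R\}$ — yields $\int_\Omega P\ge c\,t^2$ with $c=c(m,k,v)$; the quadratic power is forced precisely because the cheapest way to displace mass $t$ across the sphere $\{r=R\}$ against a strictly increasing weight costs order $t^2$. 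Since the Fraenkel asymmetry is an infimum over ball centers, $\mathcal{A}(\Omega)\le 2t/v$, so $\int_\Omega P\ge c'\,\mathcal{A}^2(\Omega)$; combined with the uniform bound $D(\Omega)\le \mathrm{const}\cdot G(R)^2 v$ this gives \eqref{1.3} with $C=C(m,k,v)$. The rigidity statement is then immediate, since equality forces $\mathcal{A}(\Omega)=0$, i.e. $\Omega$ is a geodesic ball.

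The main obstacle is the monotonicity lemma for $P$ together with the uniform lower bound $P'(R)\ge\delta>0$. The first requires a careful analysis of the radial ODE satisfied by $G$ on $\mathbb{K}H^n$, whose density $J(r)=\sinh^{m-1}r\,\cosh^{k-1}r$ (cf. Section \ref{sect.2}) makes the coefficients genuinely $k$-dependent, and it must be established across the \emph{entire} range $\a\in[-\sigma_1(B),0]$ rather than only at the Neumann and Steklov endpoints. The uniform positivity of $P'(R)$, which is what guarantees the quadratic (rather than merely cubic) asymmetry bound and a single constant independent of $\a$, is the most delicate point; I expect it from a continuity/compactness argument in $\a$ once strictness is checked for each fixed admissible $\a$. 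A secondary technical issue is justifying the two angular identities and the topological center-of-mass argument in the non-round geodesic spheres of $\mathbb{C}H^n$, $\mathbb{H}H^n$, and $\mathbb{O}H^2$, where the transitive isotropy action characteristic of rank-$1$ spaces is the essential structural input.
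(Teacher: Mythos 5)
Your scheme is the paper's scheme --- a center-of-mass choice to orthogonalize $g(r)\psi_i$ against $u_1$, the angular identities on geodesic spheres, the divergence-theorem absorption of the boundary term, then a monotone radial weight plus a quantified bathtub argument --- but the proof collapses at the step you yourself flag as the key lemma, because of how you extended the radial profile. You set $g\equiv G(R)$ for $r\ge R$ (Weinberger's Neumann extension). For $\alpha<0$ the Robin condition forces $G'(R)=-\alpha G(R)>0$, so your $g$ has a corner at $r=R$: $g'$ drops from $-\alpha G(R)$ to $0$. Your radial weight is $q=g'^2+\lambda_1(S_r)g^2+2\alpha g g'+\alpha H g^2$, and the two derivative terms satisfy $g'^2+2\alpha g g'=-\alpha^2G(R)^2$ at $r=R^-$ but vanish at $r=R^+$; hence $q$ jumps up by $\alpha^2 G(R)^2$, i.e.\ $P=\lambda_{2,\alpha}(B)\,d-q$ jumps \emph{down} by a positive multiple of $\alpha^2G(R)^2$ at $r=R$. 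So $P$ is not nondecreasing for any $\alpha<0$, and the bathtub step degrades to $\int_\Omega P\ge \int_B P-c\,\alpha^2G(R)^2|\Omega\setminus B|$: an error of the wrong sign, linear in the displaced mass, which no quadratic asymmetry term can absorb. Even ignoring the jump, monotonicity on $(R,\infty)$ fails in part of the admissible range: with $g$ constant there, $P'$ is a positive multiple of $H''-\alpha H'$, and (dividing by $H'<0$) nonnegativity requires $H''/H'\le\alpha$; since $H''/H'\to-2$ as $r\to\infty$, this fails at large $r$ whenever $\alpha<-2$, which does occur for small volume $v$ because $\sigma_1(B_R)\to\infty$ as $R\to 0$. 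So the monotonicity lemma you hope to prove is false for your extension precisely in the negative-$\alpha$ regime the theorem is about (at $\alpha=0$ your extension is the classical, correct one).

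The repair is exactly the paper's definition \eqref{defg}: extend exponentially, $g(r)=G(R)e^{-\alpha(r-R)}$ for $r>R$, so that the Robin relation $g'=-\alpha g$ persists beyond $R$, $g$ stays $C^1$, and the weight is continuous. With that extension, Lemma \ref{monotone} shows the weight $F$ is decreasing on all of $(0,\infty)$ --- this is where the hypothesis $\alpha\ge-\sigma_1(B_R)$, combined with Escobar's bound $\sigma_1(B_R)\le 1/R$, enters --- and moreover gives the explicit decay rate \eqref{F'}, namely $F'(r)\le-\frac{7}{8}\big(\frac{m-k}{\sinh^3 r}+\frac{k-1}{\sinh^3 r\cosh^3 r}\big)g^2(R)$ for $r>R$. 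That explicit rate also settles the second point you left open: it yields $\int_\Omega F\le\int_B F-C|\Omega\setminus B|^2$ with $C=C(m,k,v)$ uniform in $\alpha$ (the factor $g^2(R)$ cancels against the denominator via $\int_B g^2\le g^2(R)v$), so no continuity/compactness-in-$\alpha$ argument for a uniform slope $P'(R)\ge\delta>0$ is needed.
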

By taking $\a=0$, Theorem \ref{thm} yields that geodesic balls uniquely maximize the first nonzero Neumann eigenvalue among domains of the same volume in noncomapct ROSS, recovering Theorem 2 of \cite{AS96} proved by Aithal and Santhanam.
By taking $\a=-\sigma_1(B)$, Theorem \ref{thm} implies that geodesic balls uniquely maximize the first nonzero Steklov eigenvalue among domains of the same volume in noncomapct ROSS, which recovers the result of Binoy and Santhanam  in \cite{BS14}, and the result of  Castillon and Ruffini in \cite{CR19}.  Noticing from \cite{CR19} that the Brock-Weinstock inequality does not hold on sphere in general,   so we mention here that Theorem \ref{thm} does not hold in compact space forms or compact ROSS. 

This paper is organized as follows. In Section \ref{sect.2}, we collect some known facts on non-comapct ROSS. In Section \ref{sect.3}, we prove some properties of second Robin eigenvalues and eigenfuctions for geodesic balls in noncomapct ROSS. In Section \ref{sect.4}, we prove a monotonicity lemma, which is useful in the proof of Theorem \ref{thm}. Section \ref{sect.5} is devoted to the proof of Theorem \ref{thm}.

\subsection*{Acknowledgements}{X. Li is partially supported by a start-up grant at Wichita State University; K. Wang is partially supported by NSFC No.11601359; H. Wu is partially supported by ARC Grant DE180101348.}

\section{Geometry of non-compact rank-1 symmetric spaces }\label{sect.2}
Let $\mathbb{K}$ denote one of the following: the field $\mathbf{R}$ of real numbers,
the field $\mathbf{C}$ of complex numbers, and the algebra $\mathbf{H}$ of quaternions or or the algebra $\mathbf{Ca}$ of octonions. Let $M^m=\mathbb{K}H^n$, a non-compact ROSS of dimension $m=kn$, where $k=\operatorname{dim}_{\mathbb{R}}(\mathbb{K})$. It is known that $M$ carries $k-1$ orthogonal complex structures $J_1,\cdots, J_{k-1}$, and for any unit vector $X,Y\in T_xM$ with $Y$ orthogonal to $X, J_1 X,\cdots, J_{k-1}X$ we have
\begin{align}\label{sect}
   \Rm(X, J_i X)X=-4 J_i X, \text{\quad and \quad}
\Rm(X, Y)X=-Y, 
\end{align}
where $1\le i\le k-1$ and $\Rm$ is the Riemannian curvature operator. 

Now we collect some known facts about geodesic polar coordinates, see for example \cite[Sect. 3]{AS96}. In geodesic polar coordinates centered at a point $p\in M$, the Riemannian density function $J(r)$ is given by
$$
J(r)=\sinh^{m-1}r\cosh^{k-1}r,
$$
where $r$ is the distance function to $p$. The Laplace operator $\Delta_M$ is given by
$$
\Delta_M=\frac{\partial^2}{\partial r^2}+H(r)\frac{\partial}{\partial r}+\Delta_{S_r},
$$
where 
$$
H(r)=(\log J(r))'=(m-1)\coth r+(k-1) \tanh r,
$$
the mean curvature of the distance sphere $S_r:=\{x\in M: \operatorname{dist}(p,x)=r\}$, 
and $\Delta_{S_r}$ denotes the Laplacian of $S_r$.  Moreover,
 the first non-zero eigenvalue of $\Delta_{S_r}$ is
 \begin{align}\label{2.2}
 \l_1(S_r)=\frac{m-1}{\sinh^2 r}-\frac{k-1}{\cosh^2 r}=-H'(r),
 \end{align}
and the associated eigenfunctions are  the linear coordinate functions restricted to $\mathbb{S}^{m-1}$, denoted by  $\psi_i(\theta)$ ($1\le i\le m$), satisfying 
\begin{align}\label{sumpsi}
 \sum_{i=1}^{m} |\nabla^{S_r} \psi_i|^2=\l_1(S_r)=-H'(r),
\end{align}
see Lemma 4.11 of \cite{CR19} for details.

\section{Robin eigenvalues for geodesic balls}\label{sect.3}
The theory of self-adjoint operators yields variational characterizations of Laplacian eigenvalues. In particular, the first two Robin eigenvalues are characterized by
\begin{align}\label{eq 1.2}
\l_{1,\a}(\Omega) = \inf \left\{ \frac{\int_{\Omega} |\nabla u|^2 \,d\mu_g +\a \int_{\p \Omega} u^2\,dA_g}{\int_{\Omega}u^2 \,d\mu_g} :  u \in W^{1,2}(\Omega)\setminus \{0\} \right\}
\end{align} 
and 
\begin{align}\label{eq 1.3}
\l_{2,\a}(\Omega) = \inf \left\{ \frac{\int_{\Omega} |\nabla u|^2 \,d\mu_g  +\a \int_{\p \Omega} u^2 \,dA_g}{\int_{\Omega}u^2 \,d\mu_g} :  u \in W^{1,2}(\Omega)\setminus \{0\},\; \int_{\Omega} u u_1\,d\mu_g =0 \right\},
\end{align}
where $u_1$ is the first eigenfunction associated with $\l_{1,\a}(\Omega)$, and  the first nonzero Steklov eigenvalue $\sigma_1(\Omega)$ is characterized variationally by
\begin{align}\label{var-stek}
\sigma_1(\Omega)=\inf\left\{ \frac{\int_\Omega |\nabla u|^2 \, d\mu_g}{\int_{\partial \Omega}  u^2 \, dA_g} : u\in W^{1,2}(\Omega)\setminus\{0\},\; \int_{\partial \Omega} u\, dA_g=0 \right\},
\end{align}
 where $d\mu_g$ is the Riemnnian measure induced by the metric $g$ and $dA_g$ is the induced measure on $\p \Omega$.

Let $M$ be a non-compact ROSS and  $B_R\subset M$ be a geodesic ball of radius $R$, and we consider the following Robin eigenvalue problem
\begin{align}\label{3.1}
\begin{cases} 
-\Delta u(x) =\l u(x),  & x\in B_R, \\
\frac{\p u}{\p \nu} +\a u  =0, & x\in \p B_R.
\end{cases}
\end{align}
Since $\l_{1, \a}$ is simple and $B_R$ is rotational symmetric, then the first eigenfunction is radial, hence $\l_{1, \a}(B_R)$ is the first eigenvalue of 
\begin{align}\label{3.2}
    -f''(r)-H(r) f'(r)=\tau f(r), \quad 0<r<R,
\end{align}
with $f'(0)=0$ and $f'(R)+\a f(R)=0$. By using the separation of variables technique,
we see that the second eigenvalue $\l_{2, \a}(B_R)$ of Problem \eqref{3.1} is either the second eigenvalue of \eqref{3.2} or the first eigenvalue of 
\begin{align}\label{3.3}
 -g''(r)-H(r) g'(r)+\l_1(S_r) g(r)=\mu g(r), \quad 0<r<R,   
\end{align}
with $g(0)=0$ and $g'(R)+\a g(R)=0$, where $\l_1(S_r)$ is given by \eqref{2.2}. Moreover it's easily seen that
the first eigenvalue of \eqref{3.3}
is characterized variationally by
\begin{align}\label{3.4}
 \mu_1= \inf_{ g\in W^{1,2}((0,R))} \left\{
    \frac{\int_0^R\left( g'(r)^2+\l_1(S_r)g^2\right) J(r) \, dr +\a  g^2(R)J(R)}{\int_0^R g^2 J(r)\,  dr}: g(0)=0\right\},
\end{align}
and the associated eigenfunction $g(r)$ can be so chosen that $g(r)>0$ for $r\in (0, R]$ and $g'(0)=1$.

We  prove the following properties of the second eigenvalue and eigenfunctions of \eqref{3.1} for negative Robin parameter.
\begin{proposition}\label{pro1}
Suppose $\a\le 0$, then the second Robin eigenfunctions of \eqref{3.1} are given by 
\begin{align*}
u_i(x)=g(r) \psi_i(\theta),\quad  i=1,2, \cdots, m,
\end{align*}
where $\psi_i(\theta)$'s are the linear coordinate functions restricted to $\mathbb{S}^{m-1}$, and $g(r):[0, R]\rightarrow [0,\infty)$ solves  
\begin{equation}\label{odeg}
g''(r)+H(r) g'(r)+\left(\l_{2,\a}(B_R)-\l_1(S_r)\right) g(r)=0
\end{equation}
with boundary condition $g(0)=0$ and $g'(R)=-\a g(R)$.
\end{proposition}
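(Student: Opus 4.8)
The plan is to reduce the statement, via the separation of variables recorded before \eqref{3.1}, to a single spectral comparison between a radial mode and the lowest angular mode. The spectrum of \eqref{3.1} splits into the radial eigenvalues $\tau_1<\tau_2<\cdots$ of \eqref{3.2} and, for each eigenvalue $\lambda_j(S_r)$ of $\Delta_{S_r}$, the eigenvalues of the associated radial operator \eqref{3.3}. Since $\lambda_1(S_r)<\lambda_j(S_r)$ pointwise for $j\ge2$, the variational characterization \eqref{3.4} shows the first eigenvalue $\mu_1$ of the lowest angular mode $j=1$ is strictly smaller than any eigenvalue coming from a higher angular mode; its eigenspace is spanned by the products $g(r)\psi_i(\theta)$, where the $\psi_i$ are the $m$ linear coordinate functions and $g$ solves \eqref{odeg} with $g(0)=0$, $g'(R)=-\alpha g(R)$ (and may be taken positive on $(0,R]$, being a first Sturm--Liouville eigenfunction). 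As $\lambda_{1,\alpha}(B_R)=\tau_1$ is simple with a radial, positive eigenfunction, the proposition is equivalent to the assertion $\mu_1<\tau_2$, so that the second eigenvalue is realized exactly by the lowest angular mode with multiplicity $m$.

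A preliminary fact needed below is that $\tau_2>0$ whenever $\alpha\le0$. Indeed, $\tau=0$ forces $(Jf')'=0$, hence $Jf'\equiv c$; the condition $f'(0)=0$ together with $J(0)=0$ gives $c=0$ and $f\equiv\mathrm{const}$, which satisfies the Robin condition $f'(R)+\alpha f(R)=0$ only when $\alpha=0$. Thus zero is never a radial eigenvalue for $\alpha<0$, and since $\tau_1(0)=0<\tau_2(0)$ while the $\tau_k$ are continuous and increasing in $\alpha$, exactly one radial eigenvalue is negative for each $\alpha<0$; hence $\tau_2>0$ for all $\alpha\le0$.

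The heart of the argument is the comparison $\mu_1<\tau_2$, obtained by feeding the second radial eigenfunction into the angular variational problem after one differentiation. Let $f$ solve \eqref{3.2} with eigenvalue $\tau_2$ and set $h:=f'$. Differentiating \eqref{3.2} and using $\lambda_1(S_r)=-H'(r)$ from \eqref{2.2} shows that $h$ solves exactly the angular equation \eqref{3.3} with $\mu=\tau_2$, while $h(0)=f'(0)=0$ makes $h$ admissible in \eqref{3.4}. Integrating by parts with $J'=HJ$ (the endpoint $r=0$ contributes nothing since $J(0)=h(0)=0$) gives
\begin{align*}
\frac{\int_0^R\big((h')^2+\lambda_1(S_r)h^2\big)J\,dr+\alpha h(R)^2J(R)}{\int_0^R h^2J\,dr}=\tau_2+\frac{J(R)\,f'(R)\big(f''(R)+\alpha f'(R)\big)}{\int_0^R (f')^2J\,dr}.
\end{align*}
Substituting $f''(R)=-H(R)f'(R)-\tau_2 f(R)$ from \eqref{3.2} and the boundary relation $f'(R)=-\alpha f(R)$ reduces the numerator of the correction term to $\alpha\big(\alpha^2-\alpha H(R)+\tau_2\big)f(R)^2$. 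For $\alpha\le0$ one has $\alpha^2-\alpha H(R)\ge0$ (as $H(R)>0$) together with $\tau_2>0$, so the parenthesis is positive and the whole correction is $\le0$; hence $\mu_1\le\tau_2$. This boundary-term sign analysis, together with the positivity of $\tau_2$, is precisely where the hypothesis $\alpha\le0$ enters, and is the main obstacle.

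Finally, strictness upgrades $\mu_1\le\tau_2$ to the proposition. If $\mu_1=\tau_2$, then $h=f'$ would attain the infimum in \eqref{3.4}, hence be a genuine minimizer and satisfy the natural boundary condition $h'(R)+\alpha h(R)=0$, i.e. $f''(R)+\alpha f'(R)=0$; by the reduction above this equals $-\big(\alpha^2-\alpha H(R)+\tau_2\big)f(R)$, whose scalar factor is strictly positive, forcing $f(R)=0$ and then $f'(R)=-\alpha f(R)=0$, so $f\equiv0$ by uniqueness for \eqref{3.2}, a contradiction. Therefore $\mu_1<\tau_2$, the second Robin eigenvalue equals $\mu_1$, and its eigenfunctions are exactly $u_i(x)=g(r)\psi_i(\theta)$, $i=1,\dots,m$, as claimed.
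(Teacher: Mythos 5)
Your proof is correct, and it takes a genuinely different route from the paper's, although both reduce the proposition (via the same separation of variables) to the single inequality $\mu_1<\tau_2$ between the first eigenvalue of the angular problem \eqref{3.3} and the second eigenvalue of the radial problem \eqref{3.2}. The two arguments are dual to each other. The paper \emph{integrates} the angular eigenfunction: it builds an antiderivative $h$ of $g$ solving the radial equation with eigenvalue $\mu_1$, determines its sign from the boundary data $\mu_1 h(R)=(\alpha-H(R))g(R)<0$, and then integrates the Wronskian identity $\bigl(J(fh'-f'h)\bigr)'=(\tau_2-\mu_1)Jfh$ over the first nodal interval of the second radial eigenfunction $f$ to force $\mu_1<\tau_2$; this is a Sturm-comparison argument resting on sign and nodal information. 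You instead \emph{differentiate} the radial eigenfunction: $h=f'$ solves the angular equation \eqref{3.3} with eigenvalue $\tau_2$ (both directions hinge on the same identity $\lambda_1(S_r)=-H'(r)$ from \eqref{2.2}), it is admissible in \eqref{3.4}, and the whole comparison collapses to the sign of one boundary term, which you correctly compute to be $\alpha\bigl(\alpha^2-\alpha H(R)+\tau_2\bigr)f(R)^2\le 0$, with the equality case handled cleanly through the natural boundary condition satisfied by minimizers of \eqref{3.4}. What each approach buys: yours is purely variational, needs no nodal-domain or sign analysis, and yields strictness almost for free; its price is the auxiliary fact $\tau_2>0$ for all $\alpha\le0$, which you correctly supply (zero can be a radial eigenvalue only when $\alpha=0$, combined with continuity and monotonicity of $\tau_2$ in $\alpha$), whereas the paper's route avoids any positivity hypothesis on $\tau_2$ at the cost of the sign and nodal bookkeeping. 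Two small points you should make explicit in a final write-up: $h=f'\not\equiv0$ (a constant eigenfunction is impossible, since for $\alpha<0$ the Robin condition forces it to vanish, and for $\alpha=0$ the second eigenfunction is orthogonal to constants), and the standard Euler--Lagrange/regularity fact that a minimizer of \eqref{3.4} satisfies $h'(R)+\alpha h(R)=0$, which your strictness step invokes.
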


\begin{proof}
Let $\mu_1$ and $\tau_2$ be the first nonzero eigenvalue of \eqref{3.3} and the second eigenvalue of \eqref{3.2} respectively. Then it suffices to show
$
\mu_1<\tau_2.
$
 Let $g$ and $f$ be eigenfunctions associated to $\mu_1$ and $\tau_2$ resp., and  set
$$h(r):=\int_0^r g(t)\, dt-\frac 1 {\mu_1} g'(0),$$
then we deduce from \eqref{3.3} that
\begin{align}\label{3.7}
      -h''(r)-H(r) h'(r)=\mu_1 h(r), 
\end{align}
for $0<r\le R$,
particularly
$$
\mu_1 h(R)=-g'(R)-H(R) g(R)=\big(\a-H(R)\big)g(R)< 0,
$$
where in the inequality we used the assumption $\a<0$. 
Since $h'(r)=g(r)>0$ in $(0, R)$, then $h$ is negative in $(0, R]$.

Using \eqref{3.2} and \eqref{3.7}, we calculate 
\begin{align} \label{3.8}
    \Big(J\big(fh'-f'h\big)\Big)'=(\tau_2-\mu_1) J f h.
\end{align}
Recall that $f$ is an eigenfuction corresponding to the second eigenvalue of \eqref{3.2}, it must change sign in $(0,R)$. Without loss of generality, we  assume $f(r)$ is positive in $(0, a)$ for some  $a<R$ and $f(a)=0$. Integrating \eqref{3.8} over $(0,a)$ yields
\begin{align}\label{3.9}
\begin{split}
    &(\tau_2-\mu_1) \int_0^a J(t) f(t)h(t)\, dt\\
    =& J(r)\Big(f(r)h'(r)-f'(r)h(r)\Big)\Big|_0^a\\
    =& -J(a) f'(a) h(a).
    \end{split}
\end{align}
Since $f'(a)<0$, and $f(r)>0$ in $(0,a)$, and $h(r)<0$, we then get from \eqref{3.9} that
$
\mu_1<\tau_2,
$ proving the proposition.
\end{proof}

\begin{proposition}\label{prop 3.1}
Let $\sigma_1(B_R)$ be the first nonzero Steklov eigenvalue on $B_R$. Suppose Robin parameter $\a\in[-\sigma_1(B_R), 0)$. Then
\begin{enumerate}
\item for the function $g(r)$, characterized in  Proposition \ref{pro1}, we have
\begin{align}
   g'(r)>0, \text{\quad and\quad}  \frac{g'(r)}{g(r)}\ge -\a
\end{align}
for $r\in(0,R]$;
\item the second Robin eigenvalue 
$$ \l_{2,\a}(B_R)\ge 0.$$
\end{enumerate}
\end{proposition}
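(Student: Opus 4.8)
The plan is to prove the two parts of Proposition \ref{prop 3.1} in order, using the ODE \eqref{odeg} satisfied by $g$ together with the boundary conditions $g(0)=0$, $g'(0)=1$ (normalization), and $g'(R)=-\a g(R)$. The key structural fact I would exploit is that $\l_1(S_r)=-H'(r)$ (equation \eqref{2.2}), which lets me relate $g$ to the derivative of the radial first-eigenfunction. Throughout I assume $\a\in[-\sigma_1(B_R),0)$ and that $g>0$ on $(0,R]$ as established in Proposition \ref{pro1}.

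For part (1), I would first show $g'(r)>0$ on $(0,R]$. The natural device is to consider the quantity $w(r):=J(r)g'(r)$ or, better, to examine where $g'$ could first vanish. Rewriting \eqref{odeg} as $(Jg')'=J(\l_1(S_r)-\l_{2,\a})g$ and integrating from $0$, the sign of $g'$ is controlled by the sign of $\l_1(S_r)-\l_{2,\a}$ weighted by $g$. Since part (2) gives $\l_{2,\a}\ge 0$ while $\l_1(S_r)$ is positive and decreasing, I expect positivity of $g'$ to follow from a maximum-principle/first-zero argument: if $g'(r_0)=0$ at a first interior zero then $g''(r_0)=-(\l_{2,\a}-\l_1(S_{r_0}))g(r_0)$, and one analyzes the sign to derive a contradiction. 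For the sharper inequality $g'/g\ge -\a$, set $p(r):=g'(r)/g(r)$ (well-defined since $g>0$ on $(0,R]$); then $p$ satisfies a Riccati-type equation $p'=-p^2-H(r)p-(\l_{2,\a}-\l_1(S_r))$ derived from \eqref{odeg}. The boundary condition forces $p(R)=-\a$, so the claim $p(r)\ge -\a$ on $(0,R]$ amounts to showing $p$ attains its minimum at the right endpoint; I would establish this by a comparison/monotonicity argument on the Riccati equation, which is where the Steklov bound $\a\ge-\sigma_1(B_R)$ must enter.

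For part (2), the cleanest route is the variational characterization \eqref{3.4}: since $\l_{2,\a}(B_R)=\mu_1$ equals the first eigenvalue of \eqref{3.3} (by Proposition \ref{pro1}), I want to show the Rayleigh quotient is nonnegative for every admissible test function $g$ with $g(0)=0$. Writing the numerator as $\int_0^R(g'^2+\l_1(S_r)g^2)J\,dr+\a g^2(R)J(R)$, the nonnegativity hinges on absorbing the (negative) boundary term. The tool is the Steklov inequality: by \eqref{var-stek} applied on $B_R$, one has $\int_{B_R}|\nabla u|^2\ge\sigma_1(B_R)\int_{\p B_R}u^2$ for appropriate $u$, and translating this to the radial/angular setting shows that $\int_0^R g'^2 J\,dr\ge\sigma_1(B_R)\,g^2(R)J(R)$ (at least after the angular $\l_1$-term is accounted for). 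Combined with $\a\ge-\sigma_1(B_R)$ and $\l_1(S_r)\ge0$, every term in the numerator becomes nonnegative, giving $\l_{2,\a}(B_R)\ge0$.

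\textbf{The main obstacle} I anticipate is the interdependence of the two parts and the precise role of the Steklov constant $\sigma_1(B_R)$. Part (2)'s proof seems to need a clean one-dimensional Steklov-type inequality adapted to the weight $J(r)$ and the angular term $\l_1(S_r)g^2$, and it is not immediately obvious that the naive inequality $\int_0^R g'^2 J\ge\sigma_1 g^2(R)J(R)$ holds without using information about the specific eigenfunction $g$. I would therefore expect the careful step to be identifying exactly which test-function inequality encodes $\sigma_1(B_R)$ in this radial ODE setting, and confirming that the endpoint bound $p(R)=-\a\ge-\sigma_1(B_R)$ in part (1) is consistent with (indeed equivalent to) the Steklov characterization — so that parts (1) and (2) reinforce rather than circularly depend on each other.
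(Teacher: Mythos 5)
Your skeleton coincides with the paper's: the paper also works with $N(r)=J(r)g'(r)$ for the positivity of $g'$, with the Riccati equation \eqref{odev} for $v=g'/g$, and proves part (2) by feeding the functions $u_i=g(r)\psi_i(\theta)$ into the Steklov principle \eqref{var-stek}. Your part (2) is essentially the paper's argument and it does close: since $\int_{\partial B_R}g(R)\psi_i\,dA=0$, the $u_i$ are admissible Steklov test functions, so $\sum_i\int_{B_R}|\nabla u_i|^2\,d\mu\ge\sigma_1(B_R)\sum_i\int_{\partial B_R}u_i^2\,dA\ge-\a\sum_i\int_{\partial B_R}u_i^2\,dA$, and because the $u_i$ are exactly the $\l_{2,\a}(B_R)$-eigenfunctions (Proposition \ref{pro1}) the Rayleigh identity yields $\l_{2,\a}(B_R)\ge0$. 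This also settles your worry about the ``naive inequality'': the correct one-dimensional statement keeps the angular term, $\int_0^R\bigl(g'^2+\l_1(S_r)g^2\bigr)J\,dr\ge\sigma_1(B_R)g^2(R)J(R)$, and holds for \emph{any} radial profile vanishing at the origin (zero boundary mean is automatic from $\psi_i$), so no property of the eigenfunction is needed and no circularity arises — in the paper parts (1) and (2) are in fact logically independent of each other.

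Part (1), however, is where your proposal has genuine gaps. First, the first-zero argument for $g'>0$ does not close as stated: at a first zero $r_0$ of $g'$ one only gets $g''(r_0)=\bigl(\l_1(S_{r_0})-\l_{2,\a}\bigr)g(r_0)$, and if $\l_1(S_{r_0})\le\l_{2,\a}$ there is no local contradiction. One must use the endpoint datum $N(R)=J(R)g'(R)=-\a J(R)g(R)>0$ together with the strict monotonicity of $r\mapsto\l_1(S_r)$, so that $N'=\bigl(\l_1(S_r)-\l_{2,\a}\bigr)Jg$ changes sign at most once, is positive near $0$, and $N(0)=0$; then $N>0$ on $(0,R]$. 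Second, and more seriously, for $g'/g\ge-\a$ you only promise a ``comparison/monotonicity argument \ldots where the Steklov bound must enter'' — but that is precisely the step requiring an idea you have not supplied. The paper's mechanism is: if $v<-\a$ somewhere, pick an interior minimum $r_0$ (possible since $v\to+\infty$ at $0$ and $v(R)=-\a$); differentiating \eqref{odev} at $r_0$ gives $0=v''(r_0)+H'(r_0)v(r_0)+H''(r_0)>-\a H'(r_0)+H''(r_0)$; the contradiction then comes from the explicit estimate $H''/H'<-2\coth R<\a$, whose last inequality needs the quantitative bound $\sigma_1(B_R)\le 1/R$. That bound is Escobar's comparison theorem (inequality \eqref{sig}), an external input your proposal never identifies; without converting the hypothesis $\a\ge-\sigma_1(B_R)$ into such an explicit bound, the Riccati argument cannot be completed.
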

\begin{proof}
Recall from \eqref{sect} and \eqref{2.2} that  $\operatorname{Sect}_M\in \{-1, -4\}$ and
$$
 \l_1(S_R)=\frac{m-1}{\sinh^2 R}-\frac{k-1}{\cosh^2 R}<\frac{m-1}{R^2}=\l_1(\bar{S}_R), 
$$
where $\bar S_R$ is the boundary of the round ball  $\bar B_R$  of radius $R$  in $\R^m$, then Escobar's comparison \cite[Theorem 2]{Escobar00} gives
\begin{align}\label{sig}
\sigma_1(B_R)\le \sigma_1(\bar B_R)=\frac 1 R.
\end{align}

\emph {Proof of (1)}. Let $N(r)=J(r)g'(r)$. Using equation \eqref{odeg}, we deduce that
\begin{align*}
N'(r)=\left(\l_1(S_r)-\l_{2,\a}(B_R)\right)J(r) g(r).
\end{align*}
Recall
$$
\l_1(S_r)=\frac{m-k}{\sinh^2 r}+\frac{k-1}{\sinh^2 r\cosh^2 r}
$$
increasing in $r$, then we have that $N'(r)$ has at most one zero in $(0, R]$ and is positive near $0$. Since $N(0)=0$ and $N(R)=-\a J(R)g'(R)>0$, then 
$N(r)>0$ for $r\in(0,R]$, hence $g'(r)>0$.

Now we turn to prove 
\begin{align}\label{312}
    \frac{g'(r)}{g(r)}\ge -\a, \quad r\in (0, R].
\end{align}
Set  $v(r)=\frac{g'(r)}{g(r)}$, then $v(R)=-\a$, $v(r)>0$ for $r\in(0,R]$ and $\lim_{r\rightarrow 0^+}v(r)=+\infty$.  Using $H'(r)=-\l_1(S_r)$ and rewriting equation \eqref{odeg} as an ODE for $v$ yields
\begin{align}\label{odev}
v'+v^2+Hv+\left(\l_{2,\a}(B_R)+H'\right)=0.
\end{align}
 Suppose  \eqref{312} is not true, then there exists $r_0\in(0,R)$ such that
\begin{align*}
v'(r_0)=0,\text{\quad \quad} v''(r_0)\ge 0,\text{\quad and\quad} v(r_0)<-\a.
\end{align*}
On one hand, by differentiating \eqref{odev} in $r$, we have that at $r=r_0$ 
\begin{align}\label{alpha}
0&=v''(r_0)+H'(r_0)v(r_0)+H''(r_0) \nonumber\\
&>-\a H'(r_0) +H''(r_0).
\end{align}
On the other hand, via direct calculations, we have
\begin{align*}
\frac{H''}{H'}=&-2\coth r-2(k-1)\frac{\tanh r}{(m-k)\cosh^2 r+k-1}\\
<& -2 \coth R\\
< &\a,
\end{align*}
where in the last inequality we used assumption $\a\in[-\sigma_1(B_R), 0)$ and inequality \eqref{sig}.
In particular,  we have
$$
-\a H'(r_0)+H''(r_0)\ge 0,
$$
contradicting with \eqref{alpha}.  Hence inequality \eqref{312} comes true.
 
\emph {Proof of (2)}. Since 
\begin{align*}
\int_{\p B_R} g(r) \psi_i(\theta) \, d A=0,\quad 1\le i\le m,
\end{align*}
the functions $u_i=g(r)\psi_i(\theta)$ ($1\le i\le m$) are test functions for $\sigma_1(B_R)$. Where $\psi_i$'s are the linear coordinate functions restricted to $\mathbb{S}^{m-1}$. Therefore, by \eqref{var-stek} we get
\begin{align*}
\sum\limits_{i=1}^m\int_{B_R}|\nabla u_i|^2 \, d\mu&\ge\sigma_1(B_R)\sum\limits_{i=1}^m\int_{\p B_R}|u_i|^2 \, dA\\
&\ge-\a \sum\limits_{i=1}^m\int_{\p B_R}|u_i|^2 \, dA.
\end{align*}
Recall that
\begin{align*}
\l_{2,\a}(B_R)&=\frac{\sum\limits_{i=1}^m\int_{B_R}|\nabla u_i|^2 \, d\mu+\a \sum\limits_{i=1}^m\int_{\p B_R}|u_i|^2 \, dA}{\sum\limits_{i=1}^m\int_{B_R}u_i^2 \, d\mu},
\end{align*}
so  $\l_{2,\a}(B_R)\ge 0.$
\end{proof}

\section{A monotonicity lemma}\label{sect.4}
Relabelling the solution to \eqref{odeg} as $g_1$, we define (with a slight abuse of notation) the function $g:[0,\infty)\to[0,\infty)$ by
\begin{equation}\label{defg}
g(r) := 
\begin{cases}
g_1(r), & r\leq R,\\
g_1(R)e^{-\a(r-R)}, & r>R.
\end{cases}
\end{equation}
By definition, $g$ is continuously differentiable and  non-decreasing on $(0,\infty)$.  The following monotonicity lemma plays a key role in the proof of Theorem \ref{thm}. 

\begin{lemma}\label{monotone}
Assume that $\a \in [-\sigma_1(B_R),0]$. Define $F:[0,\infty)\to\mathbb{R}$ by
\begin{equation}\label{defH}
F(r):=g'(r)^2-H'(r) g^2(r)+2\alpha g(r) g'(r)+\alpha H(r) g^2(r),
\end{equation}
where $g(r)$ is defined in \eqref{defg}. Then $F$ is monotonically decreasing on $(0,\infty)$, and
\begin{equation}\label{F'}
F'(r)\le -\frac{7}{8}\big( \frac{m-k}{\sinh^3 r}+\frac{k-1}{\sinh^3 r\cosh^3 r}\big)g^2(R)
\end{equation}
for $r> R$.
\end{lemma}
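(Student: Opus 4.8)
The plan is to differentiate $F$ separately on $(0,R)$ and on $(R,\infty)$, where $g$ obeys different relations, and then to check that the two one-sided derivatives agree at $r=R$ so that monotonicity survives the corner. The computation is cleanest after completing the square,
\[
F(r)=\big(g'(r)+\alpha g(r)\big)^2+\big(\lambda_1(S_r)-\alpha^2+\alpha H(r)\big)g^2(r),
\]
using $H'=-\lambda_1(S_r)$ from \eqref{2.2}. On $(R,\infty)$ we have $g'=-\alpha g$ and $g^2(r)=g^2(R)e^{-2\alpha(r-R)}$, so
\[
F'(r)=Q(r)\,g^2(R)\,e^{-2\alpha(r-R)},\qquad Q(r):=-H''+3\alpha H'+2\alpha^3-2\alpha^2 H.
\]
On $(0,R)$, eliminating $g''$ through \eqref{odeg} and setting $w:=g'+\alpha g$, which is nonnegative by Proposition \ref{prop 3.1}(1), I would reduce $F'$ to the quadratic form
\[
F'(r)=-2(H-\alpha)w^2+2B\,wg+Q(r)\,g^2,\qquad B:=2\lambda_1(S_r)-\lambda_{2,\alpha}(B_R)-2\alpha^2+2\alpha H,
\]
whose $w$-free coefficient is again $Q(r)$. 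Since $w(R)=0$, both one-sided derivatives at $R$ equal $Q(R)g^2(R)$; as $g\in C^1$ this makes $F\in C^1$ and removes any corner issue. Everything then reduces to the single pointwise bound $Q(r)\le-\tfrac78 T(r)$ valid for all $r>0$, where $T(r):=\frac{m-k}{\sinh^3 r}+\frac{k-1}{\sinh^3 r\cosh^3 r}$: for $r>R$ the factor $e^{-2\alpha(r-R)}\ge1$ upgrades it to \eqref{F'}, and it also gives $Q\le0$, which feeds into the $(0,R)$ argument.

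The bound $Q\le-\tfrac78 T$ is the technical heart and, pleasantly, comes out by completing the square in $a:=-\alpha\ge0$. Writing $-Q=2Ha^2-3\lambda_1(S_r)\,a+H''+2a^3\ge 2Ha^2-3\lambda_1(S_r)\,a+H''$ and minimising the quadratic in $a$ gives $-Q\ge H''-\tfrac{9\lambda_1(S_r)^2}{8H}$. The curvature inequality $H''/H'<-2\coth r$ established in the proof of Proposition \ref{prop 3.1}, together with $H'=-\lambda_1(S_r)$, yields $H''\ge 2\coth r\,\lambda_1(S_r)$, while $\lambda_1(S_r)\le\frac{m-1}{\sinh^2 r}$ and $H\ge(m-1)\coth r$ give $\lambda_1(S_r)/H\le\frac{1}{\sinh r\cosh r}$. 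Hence
\[
-Q\ge\lambda_1(S_r)\Big(2\coth r-\tfrac{9}{8\sinh r\cosh r}\Big)=\frac{\lambda_1(S_r)}{\sinh r}\Big(2\cosh r-\tfrac{9}{8\cosh r}\Big)\ge\frac{7}{8}\,\frac{\lambda_1(S_r)}{\sinh r}\ge\frac78 T(r),
\]
the last two steps using $\cosh r\ge1$. This also explains the constant: $\tfrac78=2-\tfrac98$ is exactly the gap between the coefficient $2$ in $H''\ge2\coth r\,\lambda_1$ and the $\tfrac98$ surrendered in completing the square.

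For monotonicity on $(0,R)$ I would read $F'$ as a downward parabola in $w\ge0$ (leading coefficient $-2(H-\alpha)<0$). Where $B\le0$ its vertex sits at $w\le0$, so $F'\le Q(r)g^2\le0$ by the bound just proved. Where $B>0$ — which occurs only for small $r$, at which $\lambda_1(S_r)$ is large — the maximum over $w\ge0$ equals $\big(Q+\tfrac{B^2}{2(H-\alpha)}\big)g^2$, so one must show the discriminant inequality $2(H-\alpha)(-Q)\ge B^2$, i.e. that the quadratic form is negative semidefinite there.

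I expect this discriminant estimate to be the main obstacle. Unlike the $r>R$ bound it involves the eigenvalue $\lambda_{2,\alpha}(B_R)$ through $B$, so it cannot be reduced to a pure curvature inequality; one must combine $\lambda_{2,\alpha}(B_R)\ge0$ from Proposition \ref{prop 3.1}(2) with the Steklov bound $-\alpha\le\sigma_1(B_R)\le R^{-1}$ and the same estimates on $H,\lambda_1,H''$ as above. A Taylor expansion as $r\to0^+$ is reassuring: the leading $r^{-4}$ singularities of $2(H-\alpha)(-Q)$ and of $B^2$ cancel exactly, and the first surviving term is positive (of order $r^{-3}$ when $\alpha<0$, and of order $r^{-2}$ at the Neumann endpoint $\alpha=0$), confirming negative semidefiniteness near the origin; turning this into a uniform estimate on all of the region $\{B>0\}$ is the delicate point, with the real-hyperbolic case $k=1$ likely the tightest.
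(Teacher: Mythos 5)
Your treatment of the range $r>R$ is correct and complete: the identity $F'=Q\,g^2$ with $Q=-H''+3\alpha H'+2\alpha^3-2\alpha^2H$, the reduction to $Q\le-\tfrac78 T$ by minimising the quadratic in $a=-\alpha$, and the two curvature estimates $H''\ge 2\coth r\,\lambda_1(S_r)$ and $\lambda_1(S_r)/H\le 1/(\sinh r\cosh r)$ all check out. This is essentially the paper's second case, which reaches the same constant $\tfrac78$ by the completed square $\tfrac98x^2+3\alpha x+2\alpha^2=\tfrac98\left(x+\tfrac43\alpha\right)^2$ with $x=1/\sinh r$ (resp.\ $1/(\sinh r\cosh r)$). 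Your $C^1$-matching at $r=R$ is also right, though unnecessary: continuity of $F$ plus monotonicity on $(0,R]$ and on $[R,\infty)$ already gives monotonicity on $(0,\infty)$. The genuine gap is in your Case 1: on the region $\{B>0\}$ you reduce $F'\le 0$ to the discriminant inequality $2(H-\alpha)(-Q)\ge B^2$ and then leave it unproven — a Taylor expansion at $r=0^+$ is evidence, not a proof — so monotonicity on $(0,R)$, which is the technical heart of the lemma, is not established.

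Moreover, your diagnosis of that obstacle is backwards, and that is precisely what blocks you: you assert the inequality ``cannot be reduced to a pure curvature inequality'' because $B$ contains $\lambda_{2,\alpha}(B_R)$. The paper's key move is to peel the eigenvalue term off \emph{before} analysing the quadratic form. In your notation, with $B':=B+\lambda_{2,\alpha}(B_R)=2\left(\lambda_1(S_r)-\alpha^2+\alpha H\right)$,
\begin{equation*}
F'=\Bigl[-2(H-\alpha)w^2+2B'wg+Qg^2\Bigr]-2\lambda_{2,\alpha}(B_R)\,wg .
\end{equation*}
The subtracted term is $\le 0$ because $w\ge0$, $g\ge0$ and $\lambda_{2,\alpha}(B_R)\ge0$ (Proposition \ref{prop 3.1}), while the bracket is negative semidefinite in $(w,g)$ — a statement now free of $\lambda_{2,\alpha}$ and valid for every $\alpha\le 0$. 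Indeed, transforming back to the variables $(g,g')$, the bracket splits as
\begin{equation*}
\bigl(-2Hg'^2-4H'gg'-H''g^2\bigr)+\alpha\bigl(2g'^2+\lambda_1(S_r)g^2\bigr),
\end{equation*}
where the second summand is $\le0$ since $\alpha\le0$, and the first is (up to positive factors) the sum of an $(m-k)$-form and a $(k-1)$-form whose discriminants are proportional to $1-\cosh^2r\le0$ and $1-(\sinh^2r+\cosh^2r)^2\le0$; this sum-of-squares computation is exactly the paper's estimate of its term $I$. Equivalently, on $\{B>0\}$ one has $0<B\le B'$ (again by $\lambda_{2,\alpha}(B_R)\ge 0$), so your missing discriminant inequality follows from the $\lambda_{2,\alpha}$-free inequality $(B')^2\le 2(H-\alpha)(-Q)$, which is what the semidefiniteness of the bracket encodes. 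With this single observation your Case 1 closes; without it, the proof is incomplete.
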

\begin{proof}
{\bf Case 1.} $0<r\le R$. Differentiating $F$ in $r$ yields
\begin{align*}
F'(r)&=\underbrace{2g'(r)g''(r)-H''(r)g^2(r)-2H'(r)g(r)g'(r)}_{I}\\
&\quad +\underbrace{2\a g'(r)^2+2\a g(r) g''(r)+\a H'(r)g^2(r)+2\a H(r)g(r)g'(r)}_{II}.
\end{align*}
Using the ODE \eqref{odeg} of $g(r)$, we have
\begin{align*}
I&=2g'(r)g''(r)-H''(r)g^2(r)-2H'(r)g(r)g'(r)\\
&=-2H(r)g'(r)^2-H''(r)g^2(r)-4H'(r)g(r)g'(r)-2\l_{2,\a}(B_R)g(r)g'(r)\\
&=-2\big((m-1)\coth r+(k-1)\tanh r\big)g'(r)^2
+4\big(\frac{m-k}{\sinh^2 r}+\frac{k-1}{\sinh^2 r\cosh^2 r}\big)g(r)g'(r)\\
&\quad -2\big((m-k)\frac{\cosh r}{\sinh^3 r}+(k-1)\frac{\cosh^2 r+\sinh^2 r}{\sinh^3r\cosh^3 r}\big)g(r)^2-2\l_{2,\a}(B_R) g(r) g'(r).
\end{align*}
Since
\begin{align*}
&\quad 2\big(\frac{m-k}{\sinh^2 r}+\frac{k-1}{\sinh^2 r\cosh^2 r}\big)gg'-\big((m-1)\coth r+(k-1)\tanh r\big)g'(r)^2\\
&\quad -\big((m-k)\frac{\cosh r}{\sinh^3 r}+(k-1)\frac{\cosh^2 r+\sinh^2 r}{\sinh^3r\cosh^3 r}\big)g^2\\
&=-(m-k)\big(\frac{\cosh r}{\sinh r} g'(r)^2-\frac{2}{\sinh^2 r} gg'+\frac{\cosh r}{\sinh^3 r} g^2\big)\\
&\quad-(k-1)(\coth r+\tanh r)g'(r)^2+2(k-1)\big(\frac{1}{\sinh^2 r \cosh^2 r}\big)gg'\\
&\quad-(k-1)\big(\frac{\cosh^2 r+\sinh^2 r}{\sinh^3r\cosh^3 r}\big)g^2\\
&\le- (k-1)\big(\frac{\sinh^2r+\cosh^2 r}{\sinh r \cosh r}g'(r)^2-\frac{2gg'}{\sinh^2 r\cosh^2 r}+\frac{\sinh^2r+\cosh^2 r}{\sinh^3 r \cosh^3 r}g^2\big)\\
&< 0,
\end{align*}
then we have
\begin{align}\label{I}
    I\le -2\l_{2,\a}(B_R) g(r)g'(r).
\end{align}
Using \eqref{odeg} again, we estimate
\begin{align}\label{II}
II&= 2\a (g')^2+2\a g g''+\a H'g^2+2\a Hgg'\nonumber\\
&= 2\a (g')^2-\a H'g^2-2\a \l_{2,\a}(B_R)g^2\nonumber\\
&=\a\left(2(g')^2+(\frac{m-k}{\sinh^2 r} +\frac{k-1}{\sinh^2 r \cosh^2 r}) g^2-2 \l_{2,\a}(B_R) g^2\right)\nonumber\\
&\le-2 \a \l_{2,\a}(B_R) g^2.
\end{align}
Combing \eqref{I} and \eqref{II} together, we get
\begin{align*}
F'(r)&=I+II\\
&< -2\l_{2,\a}(B_R)gg'-2\a\l_{2,\a}(B_R)g^2\\
&=- 2\l_{2,\a}(B_R)g(r)\left(g'(r)+\a g(r)\right)\\
&\le 0,
\end{align*}
where in the last inequality we used $g'\geq -\a g$ on $(0,R]$ and $\lambda_{2,\a}(B_R)\ge 0$. Therefore, $F(r)$ is monotonically decreasing on $(0,R]$.

{\bf Case 2.} $r\ge R$. Recall from  \eqref{defg} that $g(r)=g(R)e^{-\a(r-R)}$, so we have
\begin{align*}
F(r)=\left(-\a^2-H'(r)+\a H(r)\right)g^2(r).
\end{align*}
Differentiating $F$ in $r$ yields
$$
F'(r)=\left(2\a^3-H''(r)+3\a H'(r)-2\a^2 H(r)\right)g^2(r).
$$
Since
\begin{align*}
&\quad H''(r)-3\a H'(r)+2\a^2 H(r)\\
&=2\frac{m-k}{\sinh^3 r}+2(k-1)\frac{\sinh^2 r+\cosh^2r}{\cosh^3 r\sinh^3 r}+3\a(\frac{m-k}{\sinh^2 r}+\frac{k-1}{\sinh^2 r\cosh^2 r})\\
&\quad+2\a^2\left((m-k)\frac{\cosh r}{\sinh r}+(k-1)\frac{\sinh^2 r+\cosh^2 r}{\sinh r \cosh r}\right)\\
&\ge \frac{m-k}{\sinh r}\left(\frac{2}{\sinh^2r}+\frac{3\a}{\sinh r}+2\a^2\right)\\
&\quad +\frac{k-1}{\cosh r \sinh r}\left(\frac{2}{\sinh^2r\cosh^2r}+\frac{3\alpha}{\sinh r\cosh r}+2\a^2\right)\\
&\ge \frac{7}{8}\big( \frac{m-k}{\sinh^3 r}+\frac{k-1}{\sinh^3 r\cosh^3 r}\big),
\end{align*}
then 
\begin{align*}
F'(r)&\le -\frac{7}{8}\big( \frac{m-k}{\sinh^3 r}+\frac{k-1}{\sinh^3 r\cosh^3 r}\big)g^2(r)\\
&\le -\frac{7}{8}\big( \frac{m-k}{\sinh^3 r}+\frac{k-1}{\sinh^3 r\cosh^3 r}\big)g^2(R),
\end{align*}
proving \eqref{F'}, and $F(r)$ is monotonically decreasing on $[R,+\infty)$.

\end{proof}

\section{Proof of Theorem \ref{thm}}\label{sect.5}
In this section, we shall prove  Theorem \ref{thm}. Before doing this, we first recall  the following centre of mass lemma, which will be used later.
\begin{lemma}\label{lmtest}
There exists a point $p$ in the closed geodesic convex hull of $\Omega$, such that
\begin{equation}\label{test}
\int_{\Omega} g(r_p(x))\frac{\exp_p^{-1}(x)}{r_p(x)} u_1(x) \,d\mu =0,
\end{equation}
where $g$ is defined in \eqref{defg}, $r_p(x)=\operatorname{dist}_g(p,x)$, $\exp_p^{-1}$ is the inverse of the exponential map $\exp_p:T_p M\to M$, and $u_1$ is a first positive eigenfunction for $\lambda_{1, 
\a}(\Omega)$.
\end{lemma}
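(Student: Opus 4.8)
The plan is to exhibit the vanishing integral as (minus) the gradient of a scalar function of the basepoint $p$, and then to produce the required critical point by a coercivity-plus-convexity argument that exploits the fact that $M=\mathbb{K}H^n$ is a Hadamard manifold (simply connected, with sectional curvatures in $\{-4,-1\}$ by \eqref{sect}, hence strictly negatively curved, so $\exp_p:T_pM\to M$ is a diffeomorphism and $r_p(\cdot)=\operatorname{dist}_g(p,\cdot)$ has no cut locus). First I would set $G(r):=\int_0^r g(s)\,ds$. By \eqref{defg} and Proposition \ref{prop 3.1}, $g>0$ on $(0,\infty)$ and $g(r)=g_1(R)e^{-\a(r-R)}\ge g_1(R)>0$ for $r\ge R$, so $G$ is strictly increasing with $G(r)\to\infty$. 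Define
\[
\Phi(p):=\int_{\Omega} G\big(r_p(x)\big)\,u_1(x)\,d\mu .
\]
Since $\Omega$ is bounded and $u_1>0$, $\Phi$ is finite and continuous. Using $\nabla_p r_p(x)=-\exp_p^{-1}(x)/r_p(x)$ (valid and smooth for $x\neq p$, the singularity at $x=p$ being integrable), differentiation under the integral sign gives
\[
\nabla_p \Phi(p)=-\int_{\Omega} g\big(r_p(x)\big)\frac{\exp_p^{-1}(x)}{r_p(x)}\,u_1(x)\,d\mu\ \in\ T_pM .
\]
Thus \eqref{test} is exactly the statement that $p$ is a critical point of $\Phi$, and the lemma reduces to finding such a critical point inside the closed geodesic convex hull $K:=\overline{\operatorname{conv}}(\Omega)$.

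Next I would establish existence of a global minimizer by coercivity. Fixing a basepoint $o$ and setting $D:=\sup_{x\in\Omega}d(o,x)<\infty$, the triangle inequality gives $r_p(x)\ge d(p,o)-D$ for all $x\in\Omega$, so $\Phi(p)\ge G\big(d(p,o)-D\big)\int_\Omega u_1\,d\mu\to\infty$ as $d(p,o)\to\infty$, using $G\to\infty$ and $\int_\Omega u_1\,d\mu>0$. Hence $\Phi$ is proper, attains a global minimum at some $p^*\in M$, and $\nabla_p\Phi(p^*)=0$, which yields the vector identity \eqref{test} at $p=p^*$.

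It remains to show $p^*\in K$, and more generally that every critical point of $\Phi$ lies in $K$. Suppose $p\notin K$ and let $q=\pi_K(p)$ be the nearest-point projection (well defined and unique in a Hadamard manifold), with $\xi:=\exp_p^{-1}(q)/d(p,q)$ the unit vector at $p$ pointing toward $q$. The obtuse-angle property of projections onto convex sets gives $\angle_q(p,x)\ge \pi/2$ for all $x\in K$; combined with the $\mathrm{CAT}(0)$ angle-sum inequality (strict for nondegenerate triangles since $M$ is strictly negatively curved), this forces $\angle_p(q,x)<\pi/2$, i.e.
\[
\Big\langle \frac{\exp_p^{-1}(x)}{r_p(x)},\,\xi\Big\rangle>0\qquad\text{for every }x\in\Omega .
\]
Since $g(r_p(x))\,u_1(x)>0$ for a.e.\ $x$, pairing the gradient with $\xi$ gives
\[
\big\langle \nabla_p\Phi(p),\xi\big\rangle=-\int_{\Omega} g\big(r_p(x)\big)\,u_1(x)\,\Big\langle \tfrac{\exp_p^{-1}(x)}{r_p(x)},\xi\Big\rangle\,d\mu<0,
\]
so $\nabla_p\Phi(p)\neq 0$ and $p$ cannot be a critical point. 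Therefore the minimizer $p^*$ must lie in $K$, which completes the proof.

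The \emph{main obstacle} is the convexity argument of the last paragraph: one must verify that the Hadamard geometry of $M$ forces all the unit directions $\exp_p^{-1}(x)/r_p(x)$ ($x\in\Omega$) into an open half-space of $T_pM$ whenever $p$ lies outside the geodesic convex hull, which is precisely what rules out spurious critical points of $\Phi$ outside $K$. A secondary technical point is justifying the differentiation under the integral sign, which requires controlling the integrable singularity of $\nabla_p r_p(x)$ at $x=p$.
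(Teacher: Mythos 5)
Your proof is correct and takes essentially the same approach as the paper: the paper likewise introduces the potential $G(y)=\int_\Omega\bigl(\int_0^{r_y(x)}g(t)\,dt\bigr)u_1(x)\,d\mu_x$, observes that it attains its minimum at a point $p$ of the convex hull of $\Omega$, and obtains \eqref{test} by setting $\nabla G(p)=0$. The only difference is one of detail: you spell out the coercivity of $\Phi$ and the obtuse-angle projection argument excluding critical points outside the hull, both of which the paper leaves implicit.
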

\begin{proof}
This lemma can be proved by Brouwer’s fixed point theorem (cf. \cite[Page 635]{Wei56}). Here we give a more direct proof by following  arguments  
in  \cite[Prop. 3]{FL21} and \cite[Sect. 7.4.3]{He17}. Let 
$$
G(y)=\int_\Omega \big(\int_0^{r_y(x)} g(t)\,dt\big) u_1(x)\, d\mu_x, \quad\quad y\in M.
$$
Since $g$ is monotone increasing on $(0, \infty)$ and $u_1(x)$ is positive on $\Omega$, then $G(y)$ attains its minimum at $p$ in the convex hull of $\Omega$, hence
$$
0=\nabla G(p)=\int_{\Omega} g(r_p(x))\frac{\exp_p^{-1}(x)}{r_p(x)} u_1(x) \,d\mu, 
$$
proving the lemma.
\end{proof}
Now we  turn to prove Theorem \ref{thm}.
\begin{proof}[Proof of Theorem \ref{thm}]
From here on, we fix the point $p$ according to Lemma \ref{lmtest} so that \eqref{test} holds. Let $(r, \theta)$ denote the polar coordinates centered at $p$ and $S_r$ denote the distance sphere with respect to $p$. Then we have
\begin{align*}
\frac{\exp^{-1}_p(x)}{r_p(x)} = \left(\psi_1(\theta),\psi_2(\theta),\cdots, \psi_{m}(\theta) \right),
\end{align*}
where $\psi_i$'s are the restrictions of the linear coordinate functions on $\mathbb{S}^{m-1}$. 

For $1\le i\le m$, we define
\begin{align*}
v_i(x) := g(r_p(x)) \psi_i(\theta), 
\end{align*}
and rewrite \eqref{test} as
\begin{align*}
\int_{\Omega} v_i(x)u_1(x) \, d\mu =0.
\end{align*}
So $v_i$'s are test functions for $\l_{2, \a}(\Omega)$ and we conclude
\begin{align}\label{suml}
\l_{2, \a}(\Omega) \le  \frac{\sum\limits_{i=1}^{m} \int_{\Omega} |\nabla v_i|^2\, d\mu+\a\sum\limits_{i=1}^{m}  \int_{\p \Omega} v_i^2\, dA}{\sum\limits_{i=1}^{m}  \int_{\Omega} v_i^2\, d\mu} .
\end{align}
Using \eqref{sumpsi} and $\sum_{i=1}^{m}\psi_i^2=1$, we compute that
\begin{align}\label{sum1}
\sum_{i=1}^{m} \int_{\Omega} \left|\nabla  v_i\right|^2 \, d\mu \nonumber
&= \sum_{i=1}^{m} \int_{\Omega} \left|\nabla  \left(g(r_p)\psi_i\right)\right|^2\, d\mu \nonumber \\
&=\sum_{i=1}^{m}\int_{\Omega}\left|g'(r_p)\right|^2  \psi^2_i + g^2(r_p) |\nabla^{S_r} \psi_i|^2  \, d\mu \nonumber \\
&= \int_{\Omega} \left|g'(r_p)\right|^2-H'(r_p)g^2(r_p) \, d\mu,
\end{align}
and
\begin{equation}\label{sum2}
    \sum_{i=1}^{m} \int_{\Omega} v_i^2 \, d\mu =\sum_{i=1}^{m} \int_{\Omega} |g(r_p)|^2 \psi_i^2\, d\mu  =\int_{ \Omega} |g(r_p)|^2\, d\mu,
\end{equation}
and
\begin{equation}\label{sum3}
    \sum_{i=1}^{m} \int_{\p \Omega} v_i^2 \, dA =\sum_{i=1}^{m} \int_{\p \Omega} |g(r_p)|^2 \psi_i^2\, dA  =\int_{\p \Omega} |g(r_p)|^2\, dA.
\end{equation}
Using $|\nabla r_p | =1$ and  $\Delta r_p=H(r_p)$, we estimate
\begin{align}\label{sig2}
\int_{\partial \Omega} g^2(r_p) \, dA \nonumber &\ge \int_{\partial \Omega} g^2(r_p)\langle \nabla r_p, \nu \rangle \, dA \nonumber\\
&=\int_\Omega \operatorname{div}\left(g^2(r_p)\nabla r_p\right)\, d\mu \nonumber\\
&=\int_\Omega  (g^2)'(r_p) +g^2(r_p)\Delta r_p \, d\mu \nonumber\\
&=\int_\Omega  (g^2)'(r_p) +H(r_p)g^2(r_p)  \, d\mu,
\end{align} 
where $\nu$ is the outward unit normal to $\p\Omega$.
So substituting \eqref{sum1},\eqref{sum2} and \eqref{sum3} into  \eqref{suml} yields
\begin{align*}
\l_{2, \a}(\Omega)\le \frac{\int_{\Omega}\left( \left|g'(r_p)\right|^2-H'(r_p)g^2(r_p) +2\a g(r_p)g'(r_p)+\a H(r_p) g^2(r_p)\right)\, d\mu}{\int_{\Omega}  g^2(r_p)\, d\mu},
\end{align*}
where we used \eqref{sig2} and the fact $\a\le 0$. That is
\begin{equation}\label{est-l}
 \l_{2, \a}(\Omega)\le \frac{\int_{\Omega} F(r_p)\, d\mu}{\int_{\Omega}  g^2(r_p)\, d\mu}.
\end{equation}
Let $B_R$ be the geodesic ball of radius $R$,  having the same volume as $\Omega$'s  and centered at $p$ so that \eqref{test} holds.
Recall that $g(r)\psi_i(\theta)$ are the eigenfunctions corresponding to $\l_{2,\a}(B_R)$, so then
\begin{align*}
\l_{2,\a}(B_R)=\frac{\int_{B_R} |g'|^2(r_p)-H'(r_p)g^2(r_p)\, d\mu + \a\int_{\p B_R} g^2(r_p)\, dA}{\int_{ B_R} g^2(r_p)\, d\mu}
\end{align*}
and
\begin{align*}
\int_{\partial B_R} g^2(r_p)\, dA&=\int_{\partial B_R} \langle g^2(r_p)\nabla r_p, \nu\rangle\, dA\\
&=\int_{B_R} \operatorname{div}\left( g^2(r_p)\nabla r_p \right)\, d\mu\\
&=\int_{B_R}(g^2)'+g^2\Delta r_p  \, d\mu\\
&=\int_{B_R} (g^2)'+H(r_p) g^2 \, d\mu.
\end{align*}
Therefore we conclude 
\begin{align}\label{eqlB}
\l_{2,\a}(B_R) =\frac{\int_{B_R} F(r_p)\, d\mu}{\int_{B_R} g^2(r_p)\, d\mu} .
\end{align}
Recall that $g$ defined in \eqref{defg} is increasing, we have
\begin{align}\label{ineqg^2}
\int_{\Omega} g^2(r_p) \, d\mu &= \int_{\Omega\cap B_R} g^2(r_p)\, d\mu + \int_{\Omega\setminus B_R} g^2(r_p)\, d\mu \nonumber \\
&= \int_{B_R} g^2(r_p)\, d\mu + \int_{\Omega\setminus B_R} g^2(r_p) \, d\mu-\int_{B_R \setminus \Omega} g^2(r_p)\, d\mu \nonumber \\
&\ge \int_{B_R} g^2(r_p)\, d\mu. 
\end{align}
On the other hand, by Lemma \ref{monotone}, $F$ is monotonically decreasing, so then
\begin{align}\label{ineqF1}
\int_{\Omega} F(r_p)\, d\mu &= \int_{ B_R} F(r_p)\, d\mu + \int_{\Omega\setminus B_R} F(r_p) \, d\mu-\int_{B_R\setminus \Omega } F(r_p)\, d\mu \nonumber \\
&\le \int_{ B_R} F(r_p) \, d\mu + \int_{\Omega\setminus B_R} F(r_p)-F(R)\, d\mu \nonumber \\
&\le \int_{B_R} F(r_p)\, d\mu + \int_{B_{R_1}\setminus B_R} F(r_p)-F(R) \, d\mu,
\end{align}
where $B_{R_1}$ is the ball of radius $R_1$ centered at $p$  satisfying $$|B_{R_1}\setminus B_R|=|\Omega\setminus B_R|.$$
Using \eqref{F'}, we  estimate that
\begin{align}\label{ineqF2}
&\quad \int_{B_{R_1}\setminus B_R}F(r_p)-F(R) \, d\mu\nonumber\\
&\le  -\int_{B_{R_1}\setminus B_R} \frac{7}{8}\big( \frac{m-k}{\sinh^3 R_1}+\frac{k-1}{\sinh^3 R_1\cosh^3 R_1}\big)g^2(R)(r_p-R) \, d\mu\nonumber\\
&\le -C \big(R_1-R\big)^2,
\end{align}
here and thereafter,  $C$ denotes a constant depending on $m$, $k$ and $|\Omega|=v$, which may change from line to line.
Inequalities \eqref{ineqF1} and \eqref{ineqF2}  yields 
\begin{equation}\label{est-3}
   \int_{\Omega} F(r_p)\, d\mu\le\int_{B_R} F(r_p)\, d\mu -C |\Omega\setminus B_R|^2, 
\end{equation}
where we used $|B_{R_1}\setminus B_R|=O((R_1-R)^2)$.

From \eqref{est-l}, \eqref{eqlB}, \eqref{ineqg^2} and \eqref{est-3}, we deduce
\begin{align*}
    \l_{2,\a}(\Omega)& \le \frac{\int_{B_R} F(r_p) \, d\mu-C|\Omega\setminus B_R|^2}{\int_{B_R}  g^2(r_p)\, d\mu}\\
    &= \l_{2,\a}(B_R)-C|\Omega\setminus B_R|^2,
\end{align*}
proving inequality \eqref{1.3}.

Equality in \eqref{1.3} clearly holds if $\Omega=B$. Since we always have $\lambda_{2,\alpha}(\Omega)\leq\lambda_{2,\alpha}(B)$, equality in \eqref{1.3} implies $\mathcal{A}(\Omega)=0$,  i.e. $\Omega=B$.

Therefore, Theorem \ref{thm} is proved.
\end{proof}
\bibliographystyle{plain}
\bibliography{ref}

\end{document}